\newdimen\AAdi%
\newbox\AAbo%
\def\AAk#1#2{\s_etbox\AAbo=\hbox{#2}\AAdi=\wd\AAbo\kern#1\AAdi{}}%
\def\AAr#1#2#3{\s_etbox\AAbo=\hbox{#2}\AAdi=\ht\AAbo\raise#1\AAdi\hbox{#3}}%
\font\tenmsb=msbm10 at 12pt \font\sevenmsb=msbm7 at 8pt
\font\fivemsb=msbm5 at 6pt
\newtheorem{theorem}{Theorem}
\newtheorem{remark}[theorem]{Remark}
\newtheorem{lemma}[theorem]{Lemma}
\numberwithin{equation}{section} \numberwithin{theorem}{section}
\renewcommand{\topmargin}{0cm}
\renewcommand{\oddsidemargin}{5mm}
\renewcommand{\evensidemargin}{5mm}
\renewcommand{\textwidth}{150mm}
\renewcommand{\textheight}{230mm}
\def\R{\mathbb R}
\def\f#1#2{\frac{#1}{#2}}
\def\a{\alpha}
\def\r{\Re_{I\!V}}
\def\p#1{\partial #1}
\def\de{\delta}
\def\De{\Delta}
\def\ep{\epsilon}
\def\g{\gamma}
\def\k{\kappa}
\def\la{\lambda}
\def\La{\Lambda}
\def\lan{\langle}
\def\ran{\rangle}
\def\Om{\Omega}
\def\th{\theta}
\def\Th{\Theta}
\def\Si{\Sigma}
\def\r{\rho}
\def\z{\zeta}
\begin{document}

\title
[Boundary regularity for mean curvature flows]
{Boundary regularity for mean curvature flows of higher codimension}

\author{Qi Ding}
\address{Shanghai Center for Mathematical Sciences, Fudan University, Shanghai 200438, China}
\email{dingqi@fudan.edu.cn}
\author{J. Jost}
\address{Max Planck Institute for Mathematics in the Sciences, Inselstr. 22, 04103 Leipzig, Germany}
\email{jost@mis.mpg.de}
\author{Y.L. Xin}
\address{Institute of Mathematics, Fudan University, Shanghai 200433, China}
\email{ylxin@fudan.edu.cn}

\thanks{ The first author is partially supported by NSFC 12371053, the second author  is partially supported by  COST Action CaLISTA CA21109 (European Cooperation
in Science and Technology, www.cost.eu), and the third author is partially supported by NSFC 11531012}

\begin{abstract}
In this paper, we derive global bounds for the H\"older norm of the gradient 
of solutions of graphic mean curvature flow with boundary of arbitrary codimension.
\end{abstract}

\maketitle

\section{Introduction}
For an open set $\Om \subset \R^n$, the
 graph $u=(u^1,\cdots,u^m):\Om\to \R^{n+m}$ is minimal if $u$ satisfies a system of $m$ quasilinear elliptic equations (see \cite{O})
\begin{equation}
  \label{intro1}
  g^{ij}\p_{ij}u^\a=0\qquad \mathrm{in}\ \Om ,
\end{equation}
where $(g^{ij})$ is the inverse matrix of $g_{ij}=\de_{ij}+\sum_\a\p_i u^\a\p_j u^\a$. The Dirichlet problem is one of the classical problems in the field, that is, to find solutions with boundary data
\begin{equation}
  \label{intro2}
  u^\a=\psi^\a\qquad \mathrm{on}\ \p\Om
\end{equation}
for some given $\psi=(\psi^1,\cdots,\psi^m)$ on $\p\Om$. As it turns out, in order to obtain the existence and the regularity of solutions, some conditions on the geometry of $\p\Om$ and on the boundary data are needed.

Analogously, we can consider a time dependent version, the mean curvature flow.
Let $T$ be a positive constant.
Let  $u(x,t)=(u^1(x,t),\cdots,u^m(x,t))$ for $t\in(0,T)$, $x=(x_1,\cdots,x_n)\in\Om$,  and put
$U_t(x_1,\cdots,x_n)=(x_1,\cdots,x_n,u^1(x,t),\cdots,u^m(x,t))$. We consider the case where  $M_t=\mathrm{graph}_{u(\cdot,t)}=\{(x,u(x,t))|\, x\in\Om\}\subset\R^{n+m}$ moves along the mean curvature flow, i.e.,
$$\f{d U_t}{dt}=H_{M_t},$$
where $H_{M_t}$ denotes the mean curvature of $M_t$. In coordinates, $u$ satisfies the parabolic equations
\begin{equation}\label{intro2a}\aligned
&\f{\p u^\a}{\p t}=\f{d u^\a}{dt}-\f{\p u^\a}{\p x_j}\f{\p x_j}{\p t}\\
=&\f1{\sqrt{\det{g_{kl}}}}\p_i\left(g^{ij}\sqrt{\det{g_{kl}}}\partial_j u^\a\right)-\f{u^\a_j}{\sqrt{\det{g_{kl}}}}\p_i\left(g^{ij}\sqrt{\det{g_{kl}}}\right)=g^{ij}\partial_{ij}u^\a
\endaligned
\end{equation}
for each $\a=1,\cdots,m$ on $\Om\times(0,T)$, that is, the parabolic analogue of \eqref{intro1}. And we can then prescribe initial and boundary values. Obviously, the parabolic method provides also a possible approach to the original elliptic problem.

For codimension $m=1$, the elliptic problem is quite well understood from the classical paper \cite{JS} of Jenkins and Serrin. For higher codimension, that is, for $m>1$, the situation is more difficult and less well understood. A counterexample due to Lawson and Osserman in \cite{LO} tells us that the situation is fundamentally different from the case $m=1$. Of course, this implies similar difficulties for the parabolic case.

A crucial analytical step in the solution of the boundary value problems for $C^2$ data consists in deriving a global $C^{1,\g}$-estimate (for some $\g\in(0,1)$). An important step was taken by Thorpe \cite{T}, who showed 
that for $C^3$-boundary data on a bounded smooth domain, a solution with small $C^1$-norm satisfies a $C^{1,\g}$-estimate. It is more natural, however, to assume only a bound on the $C^2$-norm of the boundary data.
In \cite{DX}, the first author and the third author derived a global $C^{1,\g}$-estimate for minimal graphs of arbitrary codimension in Euclidean space under the $C^2$-norm of the boundary data
and some suitable conditions.

In this paper, we therefore derive  uniform $C^{1,\g}$-estimates for any solution $u$ to the graphic mean curvature flow with an assumption only on the $C^2$-norm of the boundary data,  provided the gradient $|Du|$ is bounded and the product of any two singular values of $Du$ is between -1 and 1 (see Theorem \ref{C1gMCF}). This condition on the product of any two singular values cannot be removed in view of  the counterexample of Lawson and Osserman \cite{LO} (in the elliptic case). Our estimates will also play an important role in \cite{DJX} and \cite{W}, where we provide general conditions for the existence of solutions to the Dirichlet problem.

The key ingredient
in the proof of Theorem \ref{C1gMCF} is to get the interior curvature estimates of mean curvature flow using Huisken's monotonicity formula \cite{Hui},
and the global $C^{1,\g}$-estimates using uniform parabolic equations \cite{Li} of Lieberman.

\section{Preliminary}

Let $\R^n$ be the standard $n$-dimensional Euclidean space.
For a point $\mathbf{x}=(x,t)\in\R^n\times\R=\R^{n+1}$, we set $|\mathbf{x}|=\max\{|x|,|t|^{1/2}\}$ and the cylinder
$$Q_R(\mathbf{x})=\left\{\mathbf{y}=(y,s)\in\R^{n+1}|\ |\mathbf{x}-\mathbf{y}|<R,\ s<t\right\}.$$
Denote $Q_R=Q_R(\mathbf{0})$ for short.
For a domain $W\subset\R^{n+1}$, we define the parabolic boundary $\mathcal{P}W$ to be the set of all points $\mathbf{x}\in\p W$ such that the cylinder $Q_\ep(\mathbf{x})$ contains points not in $W$ for any $\ep>0$.

Let us recall the standard H\"older norms for parabolic equations.
For every set $V\subset\R^{n+1}$, $\g_1\in(0,1]$, and every (vector-valued) function $f$ defined on $V$, we set
$$[f]_{\g_1;V}(\mathbf{x})=\sup_{\mathbf{y}\in V\setminus\{\mathbf{x}\}}\f{|f(\mathbf{y})-f(\mathbf{x})|}{|\mathbf{y}-\mathbf{x}|^{\g_1}}\qquad on\ \ V,$$
and $[f]_{\g_1;V}=\sup_{\mathbf{x}\in V}[f]_{\g_1;V}(\mathbf{x})$. For each $\g_2\in(0,2]$ and $\mathbf{x}=(x,t)\in V$, put
$$\lan f\ran_{\g_2;V}(\mathbf{x})=\sup_{(x,s)\in V\setminus\{\mathbf{x}\}}\f{|f(x,s)-f(\mathbf{x})|}{|s-t|^{\g_2/2}}\qquad on\ \ V,$$
and $\lan f\ran_{\g_2;V}=\sup_{\mathbf{x}\in V}\lan f\ran_{\g_2;V}(\mathbf{x})$.
Denote $|f|_{V}=\sup_{\mathbf{x}\in V}|f(\mathbf{x})|$.

Now for any $a>0$, we write $a=k+\g$ with a nonnegative integer $k$ and $\g\in(0,1]$. Let $D$ denote the spatial derivative and $\p_t$ denote the time derivative. Set
\begin{equation}\aligned
|f|_{a;V}(\mathbf{x})=\sum_{i+2j\le k}|D^i\p_t^jf|(\mathbf{x})+\sum_{i+2j= k}[D^i\p_t^jf]_{\g;V}(\mathbf{x})+\sum_{i+2j=k-1}\lan D^i\p_t^jf\ran_{\g+1;V}(\mathbf{x})
\endaligned\end{equation}
on $V$, and $|f|_{a;V}=\sup_{\mathbf{x}\in V}|f|_{a;V}(\mathbf{x})$.
We say $f\in H_{a}(V)$ if $|f|_{a;V}<\infty$.
For $\phi\in H_1(V)$, we define 2-dilation of $\phi$ (w.r.t. the spatial derivative)
$$\sup_V\big|\La^2D\phi\big|=\sup_{\mathbf{x}\in V}\big|\La^2D\phi(\mathbf{x})\big|=\sup_{\mathbf{x}\in V,1\le i<j\le n}\mu_i(\mathbf{x})\mu_j(\mathbf{x}),$$
where $\{\mu_k(\mathbf{x})\}_{k=1}^n$ are the singular values of $D\phi(\mathbf{x})$.

Let $\Om$ be an open set in $\R^n$, and $T$ is a positive constant.
A (vector-valued) function $f=(f^1,\cdots,f^m)$ is said to be in $C^2(\Om\times(0,T),\R^m)$, if each $f^\a$ is twice differentiable w.r.t. the variable $x\in\Om$, and each $f^\a$ is differentiable w.r.t. the variable $t\in(0,T)$.
Let $F_t$ be of the form
$F_t(x_1,\cdots,x_n)=(x_1,\cdots,x_n,f^1(x,t),\cdots,f^m(x,t))$
with $t\in(0,T)$, $x=(x_1,\cdots,x_n)\in\Om$ such that $M_t=\mathrm{graph}_{f(\cdot,t)}=\{(x,f(x,t))|\, x\in\Om\}\subset\R^{n+m}$ moves along the mean curvature flow, i.e.,
$$\f{d F_t}{dt}(x,t)=H_{M_t}(x),$$
where $H_{M_t}$ denotes the mean curvature of $M_t$. Let $\De_{M_t}$ denote the Laplacian of $M_t$.
From $\De F_t=H_{M_t}$, it follows that $f=(f^1,\cdots,f^m)$ satisfies the parabolic equations
\begin{equation}\aligned
&\f{\p f^\a}{\p t}=\f{d f^\a}{dt}-\f{\p f^\a}{\p x_j}\f{\p x_j}{\p t}\\
=&\f1{\sqrt{\det{g_{kl}}}}\p_i\left(g^{ij}\sqrt{\det{g_{kl}}}f^\a_j\right)-\f{f^\a_j}{\sqrt{\det{g_{kl}}}}\p_i\left(g^{ij}\sqrt{\det{g_{kl}}}\right)=g^{ij}f^\a_{ij}
\endaligned
\end{equation}
for each $\a=1,\cdots,m$ on $\Om\times(0,T)$,
where $g_{ij}=\de_{ij}+\sum_\a f^\a_if^\a_j$, and $(g^{ij})$ is the inverse matrix of $(g_{ij})$.

Let $L$ be the parabolic operator of the second order defined by
\begin{equation}
L\phi^\a=\f{\p \phi^\a}{dt}-g^{ij}_\phi\p_{ij}\phi^\a\qquad \mathrm{for}\ \a=1,\cdots,m,\\
\end{equation}
for each $\phi=(\phi^1,\cdots,\phi^m)\in C^2(\Om\times[0,T),\R^m)$,
where $(g^{ij}_\phi)$ is the inverse matrix of $(\de_{ij}+\sum_\a \p_i\phi^\a \p_j\phi^\a)$. For convenience, we denote $g^{ij}_\phi$ by $g^{ij}$.
We say $Lf=0$ if $Lf^\a=0$ for each $\a$.
$Lf=0$ implies that graph$_{f(\cdot,t)}$ moves by mean curvature flow.

\section{A priori H\"older gradient estimate for mean curvature flow}

\begin{lemma}\label{Int|A|}
For $R>0$, let $f=(f^1,\cdots,f^m)\in C^2(\overline{Q_R},\R^m)$ satisfy $Lf=0$ in $Q_R$ with $f(\mathbf{0})=0$, where $\mathbf{0}$ is the origin of $\R^n\times\R$. If $\sup_{Q_R}\left|\La^2Df\right|<1-\ep$ for some $\ep\in(0,1)$, then there is a constant $c=c(n,m,\ep,|Df|_{Q_R})$ depending only on $n,m,\ep,|Df|_{Q_R}$ such that
\begin{equation}
|D^2f|(\mathbf{0})\le cR^{-1}.
\end{equation}
\end{lemma}
\begin{proof}
By scaling, we only need to prove this Lemma with $R=1$. Put $Q=Q_1$ and $d_{Q}(\mathbf{x})=\inf_{\mathbf{y}\in\mathcal{P}Q}|\mathbf{x}-\mathbf{y}|$.
Let us prove it by contradiction.
Let $f_i$ be a sequence of smooth solutions of the  mean curvature flow in $Q$ with $f_i(\mathbf{0})=0\in\R^m$, $\sup\left|\La^2Df_i\right|\le1-\ep$ and $\limsup_i|Df_i|_Q<\infty$ such that
\begin{equation}\aligned\label{dQD2fi}
\lim_{i\rightarrow\infty}\left(\sup_{\mathbf{x}\in Q}d_{Q}(\mathbf{x})|D^2f_i(\mathbf{x})|\right)=\infty.
\endaligned
\end{equation}
Denote $R_i=\sup_{\mathbf{x}\in Q}d_{Q}(\mathbf{x})|D^2f_i(\mathbf{x})|$. There are points $\mathbf{x}_i=(x_i,t_i)\in Q$ such that $R_i=d_{Q}(\mathbf{x}_i)|D^2f_i(\mathbf{x}_i)|$.
Set
\begin{equation}\aligned
\widetilde{f_i}(x,t)=\f1{d_{Q}(\mathbf{x}_i)}f_i\left(d_{Q}(\mathbf{x}_i)x+x_i,d^2_{Q}(\mathbf{x}_i)t+t_i\right),
\endaligned
\end{equation}
then $\widetilde{f_i}$ still satisfies $L\widetilde{f_i}=0$ and $R_i=|D^2\widetilde{f_i}(\mathbf{0})|$. Moreover, $\sup\left|\La^2D\widetilde{f_i}\right|\le1-\ep$, $\limsup_i|D\widetilde{f_i}|_Q<\infty$ and
\begin{equation}\aligned
R_i=\sup_{\mathbf{x}=(x,t)\in Q_{d_Q(\mathbf{x}_i)}(\mathbf{x}_i)}\f{d_{Q}(\mathbf{x})}{d_{Q}(\mathbf{x}_i)}
|D^2\widetilde{f_i}|\bigg|_{\left(d^{-1}_Q(\mathbf{x}_i)(x-x_i),d^{-2}_Q(\mathbf{x}_i)(t-t_i)\right)}
=\sup_{\mathbf{y}\in Q}d_{Q}(\mathbf{y})|D^2\widetilde{f_i}(\mathbf{y})|.
\endaligned
\end{equation}

Let $\mathbf{B}_R$ denote the ball in $\R^{n+m}$ centered at the origin with  radius $R>0$.
Put $M^i_t=\mathrm{graph}_{\widetilde{f_i}(\cdot,t)}$.
Since $M^i_t$ is a Lipschitz graph with uniformly bounded Lipschitz constants,
\begin{equation}\aligned
\int_{M^i_{t}\cap \mathbf{B}_1}e^{\f{|X|^2}{4t}}
\endaligned
\end{equation}
is uniformly bounded independent of $i,t\in[-1,0)$. For each $t_{j}\in(0,1]$ with $t_{j}\rightarrow0$ as $j\rightarrow\infty$, there is a sequence $l_{i,j}\rightarrow\infty$ as $i\rightarrow\infty$ such that $\{l_{i,j}\}_i$ is a subsequence of $\{l_{i,j-1}\}_i$ for each $j\ge2$, and the limit
\begin{equation}\aligned
\lim_{i\rightarrow\infty}\int_{M^{l_{i,j}}_{-t_j}\cap \mathbf{B}_{1/2}}e^{-\f{|X|^2}{4t_j}}
\endaligned
\end{equation}
exists for any $j$. Up to the choice of the subsequence of $t_j,l_{i,j}$, we assume that the limit
\begin{equation}\aligned\label{tjlijexi}
t_j^{\f n2}\lim_{i\rightarrow\infty}\int_{M^{l_{i,j}}_{-t_j}\cap \mathbf{B}_{1/2}}e^{-\f{|X|^2}{4t_j}}
\endaligned
\end{equation}
exists (and is not equal to $\infty$) as $j\rightarrow\infty$. By Huisken's monotonicity formula \cite{Hui} (see also formula (7) in \cite{EH} or (1.2) in \cite{CM1} for example),
\begin{equation}\aligned
\int_{-t_j}^{-t_k}(-t)^{\f n2}&\left(\int_{M^{l_{i,j}}_{t}\cap \mathbf{B}_{1/2}}\left|H_{M^{l_{i,j}}_t}-\f{X}{2t}\right|^2e^{\f{|X|^2}{4t}}\right)dt
\le t_j^{\f n2}\int_{M^{l_{i,j}}_{-t_j}\cap \mathbf{B}_{1/2}}e^{-\f{|X|^2}{4t_j}}\\
&-t_k^{\f n2}\int_{M^{l_{i,j}}_{-t_k}\cap \mathbf{B}_{1/2}}e^{-\f{|X|^2}{4t_k}}+c_n\int_{-t_j}^{-t_k}(-t)^{\f n2}\left(\int_{M^{l_{i,j}}_{t}\cap \mathbf{B}_{1}}e^{\f{|X|^2}{4t}}\right)dt,
\endaligned
\end{equation}
where $X$ denotes the position vector in $\R^{n+m}$, and $c_n$ is a constant depending only on $n$.
Note that $M^i_t$ is a Lipschitz graph with a uniform Lipschitz constant. With \eqref{tjlijexi} we infer
\begin{equation}\aligned
\lim_{j\rightarrow\infty}\int_{-t_j}^{0}(-t)^{\f n2}\left(\lim_{i\rightarrow\infty}\int_{M^{l_{i,j}}_{t}\cap \mathbf{B}_{1/2}}\left|H_{M^{l_{i,j}}_t}-\f{X}{2t}\right|^2e^{\f{|X|^2}{4t}}\right)dt=0.
\endaligned
\end{equation}
There is a sequence $\{l_j\}$ with $l_j\in\{l_{i,j}\}_i$, such that
\begin{equation}\aligned\label{Mljt}
\lim_{j\rightarrow\infty}\int_{-t_j}^{0}(-t)^{\f n2}\left(\int_{M^{l_j}_{t}\cap \mathbf{B}_{1/2}}\left|H_{M^{l_j}_t}-\f{X}{2t}\right|^2e^{\f{|X|^2}{4t}}\right)dt=0
\endaligned
\end{equation}
and $\lim_{j\rightarrow\infty}R_{l_j}t_j=\infty$.

Set
\begin{equation}\aligned
\widehat{f_i}(x,t)=R_{l_i}\widetilde{f_{l_i}}\left(\f{x}{R_{l_i}},\f{t}{R_{l_i}^2}\right),
\endaligned
\end{equation}
and $\Si^i_t=\mathrm{graph}_{\widehat{f_i}(\cdot,t)}$. Then $t\in[-R^{2}_{l_i},0]\mapsto\Si^i_t$ is a sequence of mean curvature flows in $B_{R_{l_i}}(0)\times\R^m$ such that $\sup_{Q_{R_{l_i}}}\left|\La^2D\widehat{f_i}\right|\le1-\ep$, $\limsup_i|D\widehat{f_i}|_{Q_{R_{l_i}}}<\infty$ and
\begin{equation}\aligned
R_{l_i}=\sup_{\mathbf{x}\in Q_{R_{l_i}}}d_{Q_{R_{l_i}}}(\mathbf{x})|D^2\widehat{f_i}(\mathbf{x})|\to\infty.
\endaligned
\end{equation}
In particular, $|D^2\widehat{f_i}(\mathbf{x})|\le2$ on $Q_{R_{l_i}/2}$.
Hence, from \eqref{Mljt} we have
\begin{equation}\aligned\label{jinftyRljtjSilj}
\lim_{j\rightarrow\infty}\int_{-R_{l_j}^2t_j}^{0}(-t)^{\f n2}\left(\int_{\Si^{l_j}_{t}\cap \mathbf{B}_{R_{l_j}/2}}\left|H_{\Si^{l_j}_t}-\f{X}{2t}\right|^2e^{\f{|X|^2}{4t}}\right)dt=0.
\endaligned
\end{equation}
Since $\widehat{f_i}$ satisfies $L\widehat{f_i}=0$, then $|\p_t\widehat{f_i}(\mathbf{x})|\le2n$ from $|D^2\widehat{f_i}(\mathbf{x})|\le2$ on $Q_{R_{l_i}/2}$. By the  Arzela-Ascoli Theorem, we can assume that
$\hat{f_i}$ converges to $f_\infty$ on any bounded domain $K\subset Q_{R_{l_i}/2}$. Furthermore, $\sup_{Q_\infty}\left|\La^2Df_\infty\right|\le1-\ep$, $|Df_\infty|_{Q_\infty}<\infty$ and $|D^2f_\infty|_{Q_\infty}\le2$ with $Q_\infty=\lim_{R\rightarrow\infty}Q_R$. Denote $\Si^\infty_t=\mathrm{graph}_{f_\infty(\cdot,t)}$. By the Fatou Lemma, from \eqref{jinftyRljtjSilj} we conclude
\begin{equation}\aligned
\int_{-1}^{0}(-t)^{\f n2}\left(\int_{\Si^{\infty}_t\cap \mathbf{B}_{R}}\left|H_{\Si_\infty}-\f{X}{2t}\right|^2e^{\f{|X|^2}{4t}}\right)dt=0
\endaligned
\end{equation}
for any $R>0$. Hence $\Si^\infty_t$ are self-shrinkers (up to scalings) for all $t<0$. Therefore, they are smooth by Allard's regularity theorem. From \cite{DW}, $\Si^\infty_t$ is an $n$-plane for each $t$. Hence $\Si^i_t$ converges to $\Si^\infty_t$ smoothly (see \cite{Wh} for instance), but this contradicts to $|D^2\widehat{f_i}(\mathbf{0})|=1$. This suffices to complete the proof.
\end{proof}

\begin{lemma}\label{|Df|xi}
For each $R>0$, let $f=(f^1,\cdots,f^m)\in C^2(\overline{Q_R},\R^m)$ satisfy $Lf=0$ in $Q_R$ with $f(\mathbf{0})=0$. If $\sup_{Q_R}\left|\La^2Df\right|<1-\ep$ for some $\ep\in(0,1)$,
then there is a constant $c=c(n,m,\ep,|Df|_{Q_R})$ depending only on $n,m,\ep,|Df|_{Q_R}$ such that
for any $\xi\in\R^n\times\R^m$ and $\iota\in\R^m$
\begin{equation}
\sup_{Q_{R/2}}|Df-\xi|\le c\left(R^{-1}\sup_{\mathbf{x}\in Q_R}|f(\mathbf{x})-\xi\cdot x-\iota|+R\right).
\end{equation}
\end{lemma}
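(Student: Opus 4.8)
\textbf{Proof proposal for Lemma \ref{|Df|xi}.}

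The plan is to argue by contradiction and rescaling, exactly in the spirit of the proof of Theorem \ref{Int|A|}, using the interior second derivative estimate as the key input. Suppose the asserted inequality fails. Then there is a sequence of solutions $f_i$ of $Lf_i=0$ in $\overline{Q_1}$ with $f_i(\mathbf{0})=0$, $\sup|\bigwedge^2 df_i|\le 1-\ep$, $\limsup_i|Df_i|_0<\infty$, together with radii $R_i\in(0,1)$ and vectors $\xi_i\in\R^n\times\R^m$, $\iota_i\in\R^m$ such that
\begin{equation}\aligned
\sup_{Q_{R_i/2}}|Df_i-\xi_i| > i\left(R_i^{-1}\sup_{\mathbf{x}\in Q_{R_i}}|f_i(\mathbf{x})-\xi_i\cdot x-\iota_i|+R_i\right).
\endaligned
\end{equation}
First I would normalize: replacing $f_i$ by $R_i^{-1}\big(f_i(R_i\,\cdot\,)-\iota_i\big)$ and $\xi_i$ by the corresponding rescaled slope, one may assume $R_i=1$, so that the right-hand side becomes a fixed multiple of $\sup_{Q_1}|f_i-\xi_i\cdot x-\iota_i|+1$. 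Since $\limsup_i|Df_i|_0<\infty$ and $f_i(\mathbf 0)=0$, the functions $f_i$ are uniformly bounded on $Q_1$; hence the right-hand side stays bounded while the left-hand side, $\sup_{Q_{1/2}}|Df_i-\xi_i|$, is forced to blow up, and in particular $|Df_i|_0\to\infty$ is impossible, so $|Df_i-\xi_i|$ blows up only because $|\xi_i|$ is bounded and $\sup_{Q_{1/2}}|Df_i|$ stays bounded? No — the correct reading is that $m_i:=\sup_{Q_{1/2}}|Df_i-\xi_i|\to\infty$ is excluded by the gradient bound, so instead we record that $m_i$ is bounded while $\sup_{Q_1}|f_i-\xi_i\cdot x-\iota_i|\to 0$; that is the regime we must rule out.

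The heart of the argument is then a blow-up in the \emph{oscillation} of the gradient rather than in the gradient itself. Set $A_i:=\sup_{Q_{1/2}}|D f_i-\xi_i|$; along a subsequence $\xi_i\to\xi_\infty$ and $f_i\to f_\infty$ locally uniformly (Arzel\`a--Ascoli, using $|Df_i|_0$ and $|\p_t f_i|\le n|D^2f_i|_0$ bounds; the latter from $L f_i=0$ together with Theorem \ref{Int|A|}, which gives $|D^2 f_i|\le c\,d_Q^{-1}$ interior to $Q_1$ and hence uniform $C^{1}$ bounds on compact subsets of $Q_1$). Theorem \ref{Int|A|} in fact gives local smooth convergence away from $\mathcal P Q_1$: $\Si^i_t=\mathrm{graph}_{f_i(\cdot,t)}$ has locally bounded second fundamental form, so the flows converge smoothly on compact subsets of $Q_1\setminus\mathcal P Q_1$ by the interior estimates and White's regularity/compactness theorem, cited as \cite{Wh}. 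On the limit, $\sup_{Q_1}|f_\infty-\xi_\infty\cdot x-\iota_\infty|=\lim_i\sup_{Q_1}|f_i-\xi_i\cdot x-\iota_i|=0$, hence $f_\infty(x,t)=\xi_\infty\cdot x+\iota_\infty$ is affine and $Df_\infty\equiv\xi_\infty$. By the smooth convergence on $\overline{Q_{1/2}}$ (which is compactly contained in $Q_1$ in the parabolic sense, since $Q_{1/2}$ stays a fixed parabolic distance from the lateral and initial parts of $\mathcal P Q_1$), we get $\sup_{Q_{1/2}}|Df_i-\xi_i|\to\sup_{Q_{1/2}}|Df_\infty-\xi_\infty|=0$, i.e. $A_i\to 0$. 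But dividing the displayed inequality by $A_i$ gives $1 > i\big(A_i^{-1}(\,\text{something going to }0\,)+A_i^{-1}\big)$, which forces $i A_i^{-1}<1$, i.e. $A_i>i\to\infty$ — contradicting $A_i\to 0$. (Equivalently: the inequality already says $A_i>i\cdot 1$ directly from the ``$+R_i$'' term after normalization, so $A_i\to\infty$, contradicting $A_i\to0$.)

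The main obstacle I anticipate is making the compactness step rigorous up to the relevant portion of the boundary — more precisely, ensuring that the rescaled cylinders $Q_1$ (in which the convergence is only interior) still contain $Q_{1/2}$ with a definite parabolic margin so that the convergence $Df_i\to\xi_\infty$ is uniform, not merely locally uniform, on $\overline{Q_{1/2}}$. Since $d_{Q_1}\ge 1/2$ on $\overline{Q_{1/2}}$, Theorem \ref{Int|A|} yields $|D^2 f_i|\le 2c$ there uniformly in $i$, and then $L f_i=0$ gives a uniform bound on $|\p_t f_i|$, so $\{Df_i\}$ is equi-Lipschitz on $\overline{Q_{1/2}}$ and the convergence is uniform there; this is what closes the argument. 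A secondary point is that the normalization step must also rescale the comparison slope $\xi_i\mapsto R_i\xi_i$ and check that the hypotheses $\sup|\bigwedge^2 df|<1-\ep$ and $|Df|_0$ bounds are scale-invariant (they are, being zero-order in derivatives of order one), and that the constant $c$ in the conclusion depends only on $n,m,\ep$ and the bound on $|Df|_0$, which is automatic from the contradiction setup.
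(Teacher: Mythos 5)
Your blow-up strategy contains a genuine error at the very first step, and it is not a small one. The inequality you are trying to prove is not scale-invariant: if you set $\widetilde{f_i}(y,s)=R_i^{-1}f_i(R_iy,R_i^2s)$ (so that $D\widetilde{f_i}(y,s)=Df_i(R_iy,R_i^2s)$ and $Q_{R_i}$ pulls back to $Q_1$), the failed inequality transforms into
$$\sup_{Q_{1/2}}|D\widetilde{f_i}-\xi_i|>i\Big(\sup_{Q_1}\big|\widetilde{f_i}-\xi_i\cdot y-R_i^{-1}\iota_i\big|+R_i\Big),$$
and the additive constant is still $R_i$, \emph{not} $1$. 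Nothing forces $R_i$ to stay bounded away from $0$ — in fact the interesting regime is exactly $R_i\to 0$ — so your repeated use of ``$A_i>i\cdot 1$'' and hence ``$A_i\to\infty$'' is unjustified. This is also the source of the internal inconsistency in your write-up: on the one hand you conclude $A_i\to\infty$ (which, with the uniform bound on $|Df_i|_0$, would force $|\xi_i|\to\infty$), while a few lines later you pass to a limit $\xi_i\to\xi_\infty$, assume $\sup_{Q_1}|f_i-\xi_i x-\iota_i|\to 0$, and conclude $A_i\to 0$; you cannot have all of these at once. Once the ``$+R_i$'' is kept honestly, the soft compactness argument gives only $A_i$, $B_i:=\sup_{Q_1}|\widetilde{f_i}-\xi_i y-R_i^{-1}\iota_i|$, and $R_i$ all tending to $0$, which is perfectly consistent with $A_i>i(B_i+R_i)$ and produces no contradiction. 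A qualitative ``limit is affine'' step throws away the quantitative rate information that the lemma is actually about.

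What the paper does instead is purely quantitative and avoids the compactness wrapper entirely. It first upgrades Theorem \ref{Int|A|} to a third-order interior bound $\sup_{Q_{3R/4}}|D^3f|\le cR^{-2}$ (by running the same blow-up with $\sup_{\mathbf{x}}d_Q^2(\mathbf{x})|D^3f_i(\mathbf{x})|$ in place of $\sup_{\mathbf{x}}d_Q(\mathbf{x})|D^2f_i(\mathbf{x})|$), then uses $Lf=0$ to control $D\p_tf$, and finally applies a parabolic interpolation inequality (Lemma 4.1 of \cite{Li}) to $g=f-\xi\cdot x-\iota$, whose second derivatives $D^2g=D^2f$ and time derivative $\p_t g=\p_t f$ are now controlled, yielding exactly $|Dg|\le c(R^{-1}\sup|g|+R)$. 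The key structural observation — that subtracting the affine function $\xi\cdot x+\iota$ leaves all higher derivatives of $f$ unchanged, so the already-proved interior bounds transfer to $g$ — is what makes the interpolation step close. If you want to keep a blow-up flavor, the correct route is: use Theorem \ref{Int|A|} to get $|D^2\widetilde{f_i}|\lesssim R_i$ and $|D^3\widetilde{f_i}|\lesssim R_i^2$ on $Q_{3/4}$ (because after rescaling $Q_{1/2}$ lies at parabolic distance $\sim R_i^{-1}$ from $\mathcal{P}Q_{1/R_i}$), Taylor-expand $\widetilde{f_i}$ at $\mathbf{0}$ to compare $D\widetilde{f_i}(\mathbf{0})$ with $\xi_i$ in terms of $B_i$ and $R_i$, and then bound the oscillation of $D\widetilde{f_i}$ over $Q_{1/2}$ by $cR_i$. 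That argument is essentially the interpolation inequality written out by hand; the contradiction sequence and the passage to a limit add nothing and, as written, introduce the error above.
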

\begin{proof}
From Lemma \ref{Int|A|}, for any $\mathbf{x}\in Q_{R/2}$ we have
$$\sup_{Q_{R/3}(\mathbf{x})}|D^2f|\le cR^{-1},$$
where $c=c(n,m,\ep,|Df|_{Q_R})$ is a general constant depending only on $n,m,\ep,|Df|_{Q_R}$.
By contradiction and considering $\sup_{\mathbf{y}\in Q}d^2_{Q}(\mathbf{y})|D^3f_i(\mathbf{y})|$ instead of $\sup_{\mathbf{y}\in Q}d_{Q}(\mathbf{y})|D^2f_i(\mathbf{y})|$ in \eqref{dQD2fi}, it is not hard to get
$$\sup_{Q_{R/4}(\mathbf{x})}|D^3f|\le cR^{-2}.$$
Taking the derivative of the equation $Lf=0$, we have
\begin{equation}\label{Dptfc}
\sup_{Q_{R/4}(\mathbf{x})}|D\p_tf|\le cR^{-2}.
\end{equation}
Set 
$$g(\mathbf{x})=f(\mathbf{x})-\xi\cdot x-\iota$$ 
for each $\xi\in\R^n\times\R^m$ and $\iota\in\R^m$, then $Dg=Df-\xi$.
With an interpolation inequality (see Lemma 4.1 of \cite{Li} for instance), we have
\begin{equation}
R|Dg|(\mathbf{x})\le c\left(\sup_{Q_{R/4}(\mathbf{x})}|g|\right)^{\f{\g}{1+\g}}\left(\sup_{Q_{R/4}(\mathbf{x})}|g|+R^{1+\g}\sup_{Q_{R/4}(\mathbf{x})}[Dg]_{\g;Q_{R/4}(\mathbf{x})}\right)^{\f1{1+\g}}
\end{equation}
for any $\mathbf{x}\in Q_{R/2}$ and any $\g\in(0,1)$. Combining Lemma \ref{Int|A|} and \eqref{Dptfc}, we have
\begin{equation}
|Dg(\mathbf{x})|\le c\left(R^{-1}\sup_{\mathbf{y}\in Q_{R/4}(\mathbf{x})}|g(\mathbf{y})|+R\right),
\end{equation}
which suffices to complete the proof.
\end{proof}

Denote $B_r=B_r(0)\subset\R^n$, and $B_r^+=B_r\cap\R^n_+$ for short.
Let $\Om$ be a bounded domain in $\R^n$ with $\p\Om\in C^2$, and $\Om_T=\Om\times(0,T)$. Then its parabolic boundary is $\mathcal{P} \Om_T=(\overline{\Om}\times\{0\})\cup(\p\Om\times[0,T])$.
Let $\k_{1,\Om}(x), \cdots , \k_{n-1,\Om}(x)$ be
the principal curvatures of $\p\Om$ at each $x\in\p\Om$. Denote
$$\k_\Om=\max_{1\le i\le n,x\in\p\Om}|\k_{i,\Om}(x)|.$$
\begin{theorem}\label{C1gMCF}
For any $\ep>0$, $T>0$ and $\psi=(\psi^1,\cdots,\psi^m)\in C^{2}\left(\overline{\Om},\R^m\right)$,
there are constants $\g\in(0,1)$, $C>0$ depending only on $n,m$, $\ep$, $|Df|_{\Om_T}$, $|\psi|_{2,\Om}$, $\k_\Om$ and $\mathrm{diam}\,\Om$ such that
if $f=(f^1,\cdots,f^m)\in H_{2}\left(\Om_T\right)\cap H_{1+\g}\left(\overline{\Om_T}\right)$ satisfies $Lf=0$ in $\Om_T$ with $f(\cdot,0)=\psi$ on $\Om\times\{0\}$, $f(\cdot,t)=\psi$ on $\p\Om$ for each $t\in[0,T]$, and $\sup_{\Om_T}\left|\La^2Df\right|<1-\ep$, then $[Df]_{\g;\Om_T}\le C$.
\end{theorem}
\begin{proof}
From Lemma \ref{Ex-MCF} in Appendix I, the flow $Lf=0$ has the short-time existence.
Hence, there are constants $\ep_*>0$ and $c>0$ depending only on $n,m,\k_\Om,\mathrm{diam}\,\Om,|\psi|_{2,\Om}$ such that
\begin{equation}\aligned\label{f32epT}
|f|_{\f32;\Om_{\min\{\ep_*,T\}}}\le c.
\endaligned
\end{equation}
In order to finish the proof, we only need to consider the case $T\ge\ep_*$.
We shall first derive H\"older estimates for $Df$ on $\p\Om\times(\ep_*/2,T)$ by following the idea of the proof of Theorem 12.5 in \cite{Li}.

For any $\mathbf{x_*}=(x_*,t_*)\in\p\Om\times(\ep_*/2,T)$, up to a translation we may assume that $\mathbf{x_*}$ is the origin in $\R^n\times\R$, and $f$ is defined in $\Om\times(-t_*,T-t_*)$.
Let
$$Q_r^+=Q_r\cap\{(x',x_n)\in\R^{n-1}\times\R|\, x_n>0\}$$
for each $r>0$. Let $F$ be the map defined in Appendix II with its inverse $F^{-1}$, and we choose $r_0=\min\{\k_\Om,\sqrt{\ep_*/2}\}$ in Appendix II. Let
$$\hat{f}=f\circ F^{-1}-\psi\circ F^{-1}$$
be the function defined in Appendix II. Then $\hat{f}=0$ in $\p Q_{r_0}^+\cap Q_{r_0}$.
Let 
$$\hat{L}=\p_t-G^{kl}(y,D\hat{f})\p_{kl},$$ 
then from \eqref{Gklu} one has
\begin{equation}\aligned
\hat{L}\hat{f}=\Th(y,D\hat{f}(y)) \qquad \mathrm{in}\ Q^+_{r_0},
\endaligned
\end{equation}
where the functions $G^{kl}$ and $\Th$ satisfy \eqref{CondGTh}.
Let
$$G(\r,R)=\left\{(x',x_n,t)\in\R^{n-1}\times\R\times\R\big|\, |x'|<R,\, 0<x_n<\r R,\, -R^2<t<0\right\}$$
for all constants $\r,R>0$.
For any set $V\in\R^{n+1}$ and any function $\varphi$ on $V$, denote $\mathrm{osc}_V\varphi=\sup_V\varphi-\inf_V\varphi$.
Denote $\mathbf{y}=(y',y_n,s)\in\R^{n-1}\times\R\times\R$.
From Lemma 7.47 in \cite{Li}, there are constants $\g',\r_*\in(0,\f12]$, and a constant $c'$ depending only on $n,m$, $|Df|_{\Om_T}$, $|\psi|_{2;\Om_T}$ and $\k_\Om$ such that
\begin{equation}\aligned\label{oscGrrfa}
\mathrm{osc}_{G(\r_*,r)}\f{\hat{f}^\a(\mathbf{y})}{y_n}\le c'\left(\f rR\right)^{\g'}\left(\mathrm{osc}_{G(\r_*,R)}\f{\hat{f}^\a(\mathbf{y})}{y_n}+R\right)
\endaligned
\end{equation}
for each $\a=1,\cdots,m$, and all $0<r<R\le r_0$.

For any fixed $\mathbf{x}=(x,t)=(x',x_n,t)\in Q_r^+$ with $r\le \f12\r_*r_0$, and $1\le\a\le m$, put $\mathbf{x}'=(x',0,t)$, $\z=D\hat{f}^\a(\mathbf{x}')$ and $\z_n=\p_{x_n}\hat{f}^\a(\mathbf{x}')$. By the definition of $\hat{f}$, $\lan\z,y\ran=\z_ny_n$ for any $y=(y_1,\cdots,y_n)\in\R^n$.
We choose $R=r_0$ in \eqref{oscGrrfa}, then
\begin{equation}\label{oscGrrfa*}
\sup_{\mathbf{y}\in Q_r^+}\left|\z_n-\f{\hat{f}^\a(\mathbf{y})}{y_n}\right|\le Cr^{\g'}
\end{equation}
for all $0<r<\f12\r_*r_0$,
which implies
\begin{equation}\label{hatfxiy}
\sup_{\mathbf{y}\in Q_{x_n}(\mathbf{x})}\left|\hat{f}^\a(\mathbf{y})-\lan\z,y\ran\right|\le Cx_nr^{\g'}.
\end{equation}
Denote $F(\mathbf{y})=(F(y),t_y)$ and $\mathbf{y}=F^{-1}(F(y),t_y)$ for each $\mathbf{y}=(y,t_y)$.
From Lemma \ref{|Df|xi}, there is a general constant $C$ depending only on $n,m$, $\ep$, $\mathrm{diam}\,\Om$, $|Df|_{\Om_T}$, $|\psi|_{2;\Om_T}$ and $\k_\Om$ such that
\begin{equation}\aligned\label{Dfpsi**}
&\left|Df^\a\big|_{F^{-1}(\mathbf{x})}-D\psi^\a\big|_{F^{-1}(x)}-(DF)^T\big|_{F^{-1}(x)}\z\right|\\
\le& \f C{\de x_n}\sup_{\mathbf{y}\in Q_{\de x_n}(F^{-1}(\mathbf{x}))}\left|f^\a(\mathbf{y})-\lan D\psi^\a\big|_{F^{-1}(x)},y\ran-\lan\z,DF\big|_{F^{-1}(x)}y\ran-\lan\z,x\ran\right|+C\de x_n.
\endaligned\end{equation}
Here, $\de$ is a positive constant $\le1$ to be defined later. The bound of $|\psi|_{2;\Om_T}$ implies
\begin{equation}\aligned\label{psi1}
\left|\psi^\a(y)-\lan D\psi^\a\big|_{F^{-1}(x)},y\ran\right|\le C\de^2x_n^2
\endaligned\end{equation}
for all $y\in B_{\de x_n}(F^{-1}(x))$. By the definition of $F$,
\begin{equation}\aligned\label{z1}
\left|\lan \z,F(y)\ran-\lan\z,DF\big|_{F^{-1}(x)}y\ran-\lan\z,x\ran\right|\le C\de^2x_n^2
\endaligned\end{equation}
for all $y\in B_{\de x_n}(F^{-1}(x))$.
Denote $\mathbf{z}=(z,s')$ for some $s'\in\R$.
Combining the definition of $\hat{f}$ in Appendix II and \eqref{Dfpsi**}\eqref{psi1}\eqref{z1}, we conclude that
\begin{equation}
\left|(DF)^T\big|_{F^{-1}(x)}D\hat{f}^\a\big|_{\mathbf{x}}-(DF)^T\big|_{F^{-1}(x)}\z\right|\le \f C{\de x_n}\sup_{\mathbf{z}\in F(Q_{\de x_n}(F^{-1}(\mathbf{x})))}\left|\hat{f}^\a(\mathbf{z})-\lan\z,z\ran\right|+C\de x_n.
\end{equation}
We choose a suitably small $\de>0$ depending on $\k_\Om$ such that $Q_{\de x_n}(F^{-1}(\mathbf{x}))\subset F^{-1}(Q_{x_n}(\mathbf{x}))$. Then it follows that
\begin{equation}\label{Dhatfxxi}
|D\hat{f}^\a(\mathbf{x})-\z|\le \f C{x_n}\sup_{\mathbf{y}\in Q_{x_n}(\mathbf{x})}\left|\hat{f}^\a(\mathbf{y})-\lan\z,y\ran\right|+Cx_n.
\end{equation}
Combining the two inequalities \eqref{hatfxiy}\eqref{Dhatfxxi} and $r\in(0,1]$ yields
\begin{equation}\label{Qxnhatf}
\left|D\hat{f}^\a(\mathbf{x})-D\hat{f}^\a(\mathbf{x}')\right|\le Cr^{\g'}
\end{equation}
for all $\mathbf{x}\in Q_r^+$ and $0<r<\f12\r_*r_0$.
From \eqref{oscGrrfa*}, it is clear that
\begin{equation}\label{hatf0x'}
\left|D\hat{f}^\a(\mathbf{0})-D\hat{f}^\a(\mathbf{x}')\right|\le Cr^{\g'}.
\end{equation}
With \eqref{Qxnhatf} and \eqref{hatf0x'}, it follows that
\begin{equation}
\left|D\hat{f}^\a(\mathbf{x})-D\hat{f}^\a(\mathbf{0})\right|\le Cr^{\g'}.
\end{equation}
Hence we have deduced the uniform $C^{1,\g'}$-norm of $f$ on $\p\Om\times(\ep_*/2,T)$.
Namely, for any $\mathbf{x}\in \p\Om\times(\ep_*/2,T)$ and $\mathbf{y}\in \Om\times(\ep_*/2,T)$, there holds
\begin{equation}\label{Holdfep*}
\left|Df(\mathbf{x})-Df(\mathbf{y})\right|\le C|\mathbf{x}-\mathbf{y}|^{\g'}.
\end{equation}

Let $\mathbf{z}\in \Om\times(\ep_*/2,T)$.
For $|\mathbf{y}-\mathbf{z}|\le \min\{d(\mathbf{y})^2,d(\mathbf{y})/2\}$, from Lemma \ref{Int|A|} we have
\begin{equation}
\left|Df(\mathbf{z})-Df(\mathbf{y})\right|\le \f{C}{d(\mathbf{y})}|\mathbf{z}-\mathbf{y}|\le C|\mathbf{z}-\mathbf{y}|^{\f12}.
\end{equation}
For $|\mathbf{y}-\mathbf{z}|\ge \min\{d(\mathbf{y})^2,d(\mathbf{y})/2\}$, let $\mathbf{y_*}$ be a point in $\p\Om\times(\ep_*/2,T)$ such that $|\mathbf{y}-\mathbf{y_*}|=d(\mathbf{y})$. Then
$$|\mathbf{y_*}-\mathbf{z}|\le |\mathbf{y_*}-\mathbf{y}|+|\mathbf{y}-\mathbf{z}|= d(\mathbf{y})+|\mathbf{y}-\mathbf{z}|.$$
Combining \eqref{Holdfep*} we have
\begin{equation}\aligned
&\left|Df(\mathbf{z})-Df(\mathbf{y})\right|\le \left|Df(\mathbf{y_*})-Df(\mathbf{y})\right|+\left|Df(\mathbf{z})-Df(\mathbf{y_*})\right|\\
\le& C|\mathbf{y_*}-\mathbf{y}|^{\g'}+C|\mathbf{y_*}-\mathbf{z}|^{\g'}\le Cd(\mathbf{y})^{\g'}+C\left(d(\mathbf{y})+|\mathbf{y}-\mathbf{z}|\right)^{\g'}\le C|\mathbf{y}-\mathbf{z}|^{\g'/2}.
\endaligned\end{equation}
Hence, $[Df]_{\g'/2;\Om\times(\ep_*/2,T)}\le C$. Together with \eqref{f32epT}, we deduce
\begin{equation}\aligned
\left[Df\right]_{\g'/2;\Om_{T}}\le C.
\endaligned
\end{equation}
This completes the proof.
\end{proof}
\begin{remark}
If we remove the condition $\sup_{\Om_T}\left|\La^2Df\right|<1-\ep$ in Theorem \ref{C1gMCF}, then it's almost impossible to control the Hessian of $f$.
In general, $\sup_{\Om}\left|\La^2Df(\cdot,0)\right|<1-\ep$ does not preserved along mean curvature flow.
However, in \cite{DJX} we find a class of parabolic boundary data $\psi$ such that $\sup_{\Om}\left|\La^2Df(\cdot,t)\right|<1-\ep$ does preserve in a suitable sense along mean curvature flow.
\end{remark}
\begin{remark} Under the assumption of Theorem \ref{C1gMCF}, we can use the conclusion of Theorem \ref{C1gMCF} and Theorem 5.15 in \cite{Li} to deduce that for any $\g\in(0,1)$
there is a constant $C>0$ depending only on $n,m$, $\ep,\g$, $|Df|_{\Om_T}$, $|\psi|_{2,\Om}$, $\k_\Om$, $\mathrm{diam}\,\Om$ and $T$ such that
$$|f|_{1+\g;\Om_T}\le C.$$
\end{remark}

\section{Appendix I}

Let $\Om$ be a bounded domain in $\R^n$ with $\p\Om\in C^2$, and $\psi=(\psi^1,\cdots,\psi^m)\in C^2(\overline{\Om},\R^m)$.
Denote $\Om_T=\Om\times(0,T)$ for some $T>0$.
Let us consider the flow
\begin{equation}\label{MCF*}
\left\{\begin{split}
Lf^\a=&\f{\p f^\a}{dt}-g^{ij}f^\a_{ij}=0\qquad \mathrm{in}\ \Om_T\\
f^\a=&\psi^\a\qquad\qquad\qquad\quad \mathrm{on}\ \mathcal{P}\Om_T\\
\end{split}\right.\qquad\qquad \mathrm{for}\ \a=1,\cdots,m,
\end{equation}
where $(g^{ij})$ is the inverse matrix of $g_{ij}=\de_{ij}+\sum_\a \p_if^\a \p_jf^\a$.
In general, $L\psi\neq0$ on $\p\Om\times(0,T)$. Hence, we do not have the standard boundary estimate or the short-time existence of \eqref{MCF*} immediately.
Now let us define certain weighted norms as follows (see page 47 in \cite{Li}).
For each $\mathbf{x}=(x,t)\in\R^n\times\R$, let
$$\r(\mathbf{x})=\inf\{|\mathbf{y}-\mathbf{x}||\ \mathbf{y}=(y,s)\in\mathcal{P}\Om_T,\, s<t\},$$
and $\r(\mathbf{x},\mathbf{y})=\min\{\r(\mathbf{x}),\r(\mathbf{y})\}$. Denote $\mathrm{diam}\, \Om_T=\sup_{\mathbf{x},\mathbf{y}\in \Om_T}|\mathbf{x}-\mathbf{y}|$. For any (vector-valued) function $\phi$ on $\Om_T$, we define
\begin{equation}
|\phi|_{0;\Om_T}^{(b)}=\left\{\begin{split}
\sup_{\Om_T} \r^b|\phi|\qquad \mathrm{for}\ b\ge0\\
(\mathrm{diam}\, \Om_T)^b\sup_{\Om_T}|\phi|\qquad \mathrm{for}\ b<0\\
\end{split}\right..
\end{equation}
We further assume $\phi\in C^{2}(\Om_T)$.
For $a=k+\g>0$ with $\g\in(0,1]$ and $a+b\ge0$, we define
\begin{equation}\aligned
\left[ \phi \right]^{(b)}_{a;\Om_T}=&\sup_{\mathbf{x}\neq \mathbf{y}\in \Om_T}\r(\mathbf{x},\mathbf{y})^{a+b}\sum_{i+2j=k}|\mathbf{x}-\mathbf{y}|^{-\g}|D^i\p^j_t\phi(\mathbf{x})-D^i\p^j_t\phi(\mathbf{y})|,\\
\left\lan \phi \right\ran^{(b)}_{a;\Om_T}=&\sup_{\mathbf{x}=(x,t)\neq \mathbf{y}=(x,s)\in \Om_T}\r(\mathbf{x},\mathbf{y})^{a+b}\sum_{i+2j=k-1}|\mathbf{x}-\mathbf{y}|^{-1-\g}|D^i\p^j_t\phi(\mathbf{x})-D^i\p^j_t\phi(\mathbf{y})|,\\
\left|\phi \right|^{(b)}_{a;\Om_T}=&\sum_{i+2j\le k}|D^i\p^j_t\phi|_0^{(i+2j+b)}+\left[ \phi \right]^{(b)}_a+\left\lan \phi \right\ran^{(b)}_a.
\endaligned
\end{equation}

Now let us state the short-time existence of mean curvature flows (see Theorem 8.2 in \cite{Li}).
\begin{lemma}\label{Ex-MCF}
For each $0<\g<1$ and $\psi=(\psi^1,\cdots,\psi^m)\in H_{1+\g}(\overline{\Om})$, there are constants $\de>0$ and $c>0$ depending only on $n,m,\g,\k_\Om,\mathrm{diam}\,\Om,|\psi|_{1+\g,\Om}$ and a function $f=(f^1,\cdots,f^m)\in C^\infty(\Om_\de,\R^m)\cap H_{1+\g}(\overline{\Om_\de})$ with $Lf=0$ in $\Om_\de$ such that $f(\cdot,0)=\psi$ on $\Om\times\{0\}$, $f(\cdot,t)=\psi$ on $\p\Om$ for each $t\in[0,\de]$, and $|f|_{1+\g;\Om_\de}\le c$.
\end{lemma}
\begin{proof}
For any $\de\in(0,\mathrm{diam}\,\Om)$ and $0<\th<\g$, let $C_0=1+|\psi|_{1+\th,\Om}$ and
$$\mathfrak{B}=\{\phi=(\phi^1,\cdots,\phi^m)\in H_{1+\th}(\overline{\Om_\de})|\, |\phi|_{1+\th;\Om_\de}\le C_0\}.$$
Denote $T=\mathrm{diam}\,\Om$. We extend $\phi=(\phi^1,\cdots,\phi^m)$ to be a (vector-valued) function in $H_{1+\th}(\overline{\Om_T})$ by $\phi(\cdot,t)=\phi(\cdot,\de)$ for all $t\in(\de,T]$. It follows that $|\phi|_{1+\th;\Om_T}\le|\phi|_{1+\th;\Om_\de}\le C_0$.
Let $a^\phi_{ij}=\de_{ij}+\sum_\a \p_i\phi^\a \p_j\phi^\a$ for each $i,j=1,\cdots,n$, and $(a_\phi^{ij})$ be the inverse matrix of $(a^\phi_{ij})$.
Then $|a_\phi^{ij}|_{\th;\Om_T}$ is bounded by a constant depending only on $n,m,C_0$.

For every function $\varphi\in C^{2}(\Om_T)$, we define
\begin{equation}\aligned
L_\phi \varphi=\f{\p \varphi}{dt}-a_\phi^{ij}\p_{ij}\varphi.
\endaligned
\end{equation}
Let $\psi(x,t)=\psi(x,0)$ for each $(x,t)\in\overline{\Om_T}$.
From Theorem 5.15 in \cite{Li}, for each $\a$ there is a unique function $\varphi^\a$ satisfying $L_\phi \varphi^\a=0$ with $\varphi^\a=\psi^\a$ on $\mathcal{P}\Om_T$.
Moreover, there is a general constant $c$ depending on $n,m,\g,\k_\Om,\mathrm{diam}\,\Om$ and $|\phi|_{1+\th;\Om_T}$ such that
\begin{equation}\aligned\label{fathg}
|\varphi^\a|_{2+\th;\Om_T}^{(-1-\g)}\le c|\psi^\a|_{1+\g,\Om}.
\endaligned
\end{equation}
In particular, $(\mathrm{diam}\, \Om_T)^{-\g}\sup_{\Om_T}|D\varphi^\a|\le c|\psi^\a|_{1+\g,\Om}$.
For any $\mathbf{x},\mathbf{y}\in\Om_T$, without loss of generality, we assume $\r(\mathbf{y})\le \r(\mathbf{x})$.
We assume $\r(\mathbf{y})<\f12\min\{|\mathbf{x}-\mathbf{y}|,\k_{\Om}\}$, or else
it's clear that 
\begin{equation}\aligned
|D\varphi^\a(\mathbf{y})-D\varphi^\a(\mathbf{x})|\le c'|\varphi^\a|_{2+\th;\Om_T}^{(-1-\g)}|\mathbf{y}-\mathbf{x}|^\g,
\endaligned
\end{equation}
where $c'$ is a general constant depending only on $n,m,\g,\k_\Om,\mathrm{diam}\,\Om$.
There exists a point $\mathbf{z_y}\in\Om_T$ such that $|\mathbf{z_y}-\mathbf{y}|=|\mathbf{y}-\mathbf{x}|$ and $\r(\mathbf{z_y})=\r(\mathbf{y})+|\mathbf{y}-\mathbf{z_y}|$.
Then
\begin{equation}\aligned\label{zyxy}
|\mathbf{x}-\mathbf{y}|\le\r(\mathbf{z_y})<\f12|\mathbf{x}-\mathbf{y}|+|\mathbf{y}-\mathbf{z_y}|\le\f32|\mathbf{x}-\mathbf{y}|.
\endaligned
\end{equation}
We choose a sequence of points $\mathbf{y}_0=\mathbf{y},\mathbf{y}_1,\cdots,\mathbf{y}_{N-1},\mathbf{y}_N=\mathbf{y_z}$
such that $|\mathbf{y}_i-\mathbf{y}_{i+1}|=\f12\r(\mathbf{y}_{i+1})$ and $\r(\mathbf{y}_{i+1})=\r(\mathbf{y}_{i})+|\mathbf{y}_i-\mathbf{y}_{i+1}|$ for $i=0,\cdots,N-1$,
and $|\mathbf{y}_{N-1}-\mathbf{y}_{N}|\le\f12\r(\mathbf{y}_{N})$, $\r(\mathbf{y}_{N})=\r(\mathbf{y}_{N-1})+|\mathbf{y}_{N-1}-\mathbf{y}_{N}|$.
Then $\r(\mathbf{y}_{i+1})=2\r(\mathbf{y}_{i})$ for $i=0,\cdots,N-1$, and $\r(\mathbf{y}_{N})\le2\r(\mathbf{y}_{N-1})$.
Hence from \eqref{zyxy} one has
\begin{equation}\aligned\label{yizy}
\r(\mathbf{y}_{i})=\f12\r(\mathbf{y}_{i+1})=2^{i+1-N}\r(\mathbf{y}_{N-1})\le2^{i+1-N}\r(\mathbf{z_y})\le3\times2^{i-N}|\mathbf{x}-\mathbf{y}|.
\endaligned
\end{equation}
Since
\begin{equation}\aligned
|D\varphi^\a(\mathbf{y}_{i+1})-D\varphi^\a(\mathbf{y}_i)|\le
\left(\f{\r(\mathbf{y}_{i+1})}2\right)^{1+\th}\f{|D\varphi^\a(\mathbf{y}_{i+1})-D\varphi^\a(\mathbf{y}_i)|}{|\mathbf{y}_{i+1}-\mathbf{y}_i|^{1+\th}}
\le\r(\mathbf{y}_{i+1})^\g|\varphi^\a|_{2+\th;\Om_T}^{(-1-\g)},
\endaligned
\end{equation}
combining this with \eqref{yizy} we get
\begin{equation}\aligned\label{Dfayzy}
&|D\varphi^\a(\mathbf{y})-D\varphi^\a(\mathbf{z_y})|\le\sum_{i=0}^{N-1}|D\varphi^\a(\mathbf{y}_{i+1})-D\varphi^\a(\mathbf{y}_i)|\\
\le& 3|\mathbf{x}-\mathbf{y}|^\g|\varphi^\a|_{2+\th;\Om_T}^{(-1-\g)}\sum_{i=0}^{N-1}2^{(i+1-N)\g}\le \f3{1-2^{-\g}}|\mathbf{x}-\mathbf{y}|^\g|\varphi^\a|_{2+\th;\Om_T}^{(-1-\g)}.
\endaligned
\end{equation}

For $\r(\mathbf{x})\ge\f12\min\{|\mathbf{x}-\mathbf{y}|,\k_{\Om}\}$, we clearly have
\begin{equation}\aligned\label{Dfaxzy}
|D\varphi^\a(\mathbf{x})-D\varphi^\a(\mathbf{z_y})|\le c'|\mathbf{x}-\mathbf{y}|^\g|\varphi^\a|_{2+\th;\Om_T}^{(-1-\g)}.
\endaligned
\end{equation}
For $\r(\mathbf{x})<\f12\min\{|\mathbf{x}-\mathbf{y}|,\k_{\Om}\}$, there exists a point $\mathbf{z_x}\in\Om_T$ such that $|\mathbf{z_x}-\mathbf{x}|=|\mathbf{y}-\mathbf{x}|$ and $\r(\mathbf{z_x})=\r(\mathbf{x})+|\mathbf{x}-\mathbf{z_x}|$.
Then analogously to the above argument, it follows that
\begin{equation}\aligned\label{xzx11}
|D\varphi^\a(\mathbf{x})-D\varphi^\a(\mathbf{z_x})|\le c'|\mathbf{x}-\mathbf{y}|^\g|\varphi^\a|_{2+\th;\Om_T}^{(-1-\g)}.
\endaligned
\end{equation}
Combining the definition of $\mathbf{z_x},\mathbf{z_y}$, 
$$|\mathbf{x}-\mathbf{y}|\le\min\{\r(\mathbf{z_x}),\r(\mathbf{z_y})\}\le\max\{\r(\mathbf{z_x}),\r(\mathbf{z_y})\}\le\f32|\mathbf{x}-\mathbf{y}|$$ and then
\begin{equation}\aligned\label{xzxy11}
|D\varphi^\a(\mathbf{z_x})-D\varphi^\a(\mathbf{z_y})|\le c'|\mathbf{x}-\mathbf{y}|^\g|\varphi^\a|_{2+\th;\Om_T}^{(-1-\g)}.
\endaligned
\end{equation}
Combining \eqref{xzx11} and \eqref{xzxy11}, we get \eqref{Dfaxzy}.
With \eqref{Dfayzy}\eqref{Dfaxzy}, we deduce
\begin{equation}\aligned
\left[D\varphi^\a\right]_{\g;\Om_T}\le c'|\varphi^\a|_{2+\th;\Om_T}^{(-1-\g)}.
\endaligned
\end{equation}
Similarly, we have
\begin{equation}\aligned
\left\lan \varphi^\a\right\ran_{1+\g;\Om_T}\le c'|\varphi^\a|_{2+\th;\Om_T}^{(-1-\g)}.
\endaligned
\end{equation}
Therefore, with \eqref{fathg} we get
\begin{equation}\aligned
\left| \varphi^\a\right|_{1+\g;\Om_T}\le c'|\varphi^\a|_{2+\th;\Om_T}^{(-1-\g)}\le c|\psi^\a|_{1+\g,\Om}.
\endaligned
\end{equation}
From the Newton-Leibniz formula,
$$|\varphi^\a-\psi^\a|\le |\varphi^\a-\psi^\a|_{1+\g;\Om_\de}\de^{\f{1+\g}2}\le c|\psi^\a|_{1+\g,\Om}\de^{\f{1+\g}2}$$
on $\Om_\de$ and then
$$|\varphi^\a-\psi^\a|_{1+\th;\Om_\de}\le c|\varphi^\a-\psi^\a|_{\Om_\de}^{\f{\g-\th}{1+\g}}|\varphi^\a-\psi^\a|_{1+\g;\Om_\de}^{\f{1+\th}{1+\g}}\le c\de^{\f{\g-\th}2}|\psi^\a|_{1+\g,\Om_\de}^{\f{1+\th}{1+\g}}$$
by interpolation (see Proposition 4.2 in \cite{Li} for instance).
Hence, $|\varphi|_{1+\th;\Om_\de}\le C_0$ for the sufficiently small $\de>0$.

Now we define a map $J:\, \mathfrak{B}\rightarrow H_{1+\th}(\overline{\Om_\de})$ by $\varphi=(\varphi^1,\cdots,\varphi^m)=J\phi$ (restricted on $\overline{\Om_\de}$).
Then $|\varphi|_{1+\th;\Om_\de}\le C_0$ implies $J\mathfrak{B}\subset\mathfrak{B}$.
Since $\mathfrak{B}$ is a convex compact subset of $H_{1+\th'}(\overline{\Om_\de})$ for any $\th'\in(0,\th)$,
it follows that $J$ has a fixed point $f=(f^1,\cdots,f^m)\in H_{1+\g}(\overline{\Om_\de})$ with
\begin{equation}\aligned
\left| f\right|_{1+\g;\Om_\de}\le c|\psi|_{1+\g,\Om}.
\endaligned
\end{equation}
This completes the proof.
\end{proof}

\section{Appendix II}

For studying the boundary regularity of parabolic systems, we usually only need to consider a similar system on a portion of a half space by a coordinate transformation.
Let $B_r$ be a ball with radius $r$ and centered at the origin in $\R^n$. Let $\Om$ be a domain in $\R^n$ with $C^2$-boundary $\p\Om\ni0$.
For any $r_0\in(0,1/\k_\Om)$, we assume that there is a coordinate change $F:\ B_{r_0}\rightarrow F(B_{r_0})\subset\R^n$ such that $F,F^{-1}$ are $C^2$-maps satisfying $F(B_{r_0}\cap\p\Om)\subset\{y=(y_1,\cdots,y_n)\in\R^n |\,y_n=0\}$, $F(B_{r_0}\cap\Om)\subset\{y=(y_1,\cdots,y_n)\in\R^n |\, y_n>0\}$, and the matrix $DF(DF)^T$ has eigenvalues between two constants $\La_F^{-1}$ and $\La_F$, where $\La_F>1$ is a constant depending only on $n,\k_\Om$. Without loss of generality, we can assume
\begin{equation}\aligned
\sup_{B_{r_0}}|D^2F|\le\La_F.
\endaligned
\end{equation}
For each $C^2$ vector-valued function $f=(f^1,\cdots,f^m)$ in $\Om_{t_1t_2}=\Om\times(t_1,t_2)$,
we define a new function $\tilde{f}$ by
$\tilde{f}(y,t)=f(x,t)$ with $y=(y_1,\cdots,y_n)=F(x)=(F^1(x),\cdots,F^n(x))$. Then $Df=DF\cdot D\tilde{f}$. Put
\begin{equation}\aligned
A_{ij}(y,D\tilde{f}(y,t))=\de_{ij}+\p_{y_k}\tilde{f}^\a(y,t)\p_{x_i}F^k(F^{-1}(y))\cdot\p_{y_l}\tilde{f}^\a(y,t)\p_{x_j}F^l(F^{-1}(y)).
\endaligned
\end{equation}

Now we assume that $f$ satisfies the flow
\begin{equation}\aligned
\f{\p f}{\p t}-g^{ij}\p^2_{x_ix_j}f=0\qquad \mathrm{in}\ \  \Om_{t_1t_2}
\endaligned
\end{equation}
with $f=\psi$ on $\mathcal{P}\Om_{t_1t_2}$,
where $(g^{ij})$ is the inverse matrix of $g_{ij}=\de_{ij}+\sum_\a f^\a_if^\a_j$.
Then
\begin{equation}\aligned
0=\p_t\tilde{f}-\p_{x_i}F^kA^{ij}\p_{x_j}F^l\p^2_{y_ky_l}\tilde{f}-A^{ij}\p^2_{x_ix_j}F^k\p_{y_k}\tilde{f}.
\endaligned
\end{equation}
Set $\tilde{\psi}$ by $\psi=\tilde{\psi}\circ F$, and $\hat{f}=\tilde{f}-\tilde{\psi}$ so that $\hat{f}=0$ on $F(\p\Om\cap B_{r_0})\times(t_1,t_2)$. Put
\begin{equation}\aligned
&G^{kl}(y,D\hat{f})=A^{ij}(y,D\hat{f}+D\tilde{\psi})\p_{x_i}F^k\p_{x_j}F^l,\\
&\Th(y,D\hat{f})=A^{ij}(y,D\hat{f}+D\tilde{\psi})\p^2_{x_ix_j}F^k\left(\p_{y_k}\hat{f}+\p_{y_k}\tilde{\psi}\right)+G^{kl}(y,D\hat{f})\p^2_{y_ky_l}\tilde{\psi}-\p_t\tilde{\psi}.
\endaligned
\end{equation}
Then $\hat{f}$ satisfies the parabolic system
\begin{equation}\aligned\label{Gklu}
\p_t\hat{f}=G^{kl}(y,D\hat{f}(y))\p^2_{y_ky_l}\hat{f}+\Theta(y,D\hat{f}(y)).
\endaligned
\end{equation}
Hence there is a positive constant $\la_f$ depending only on $n,m$, $\La_F$, $|Df|_{0;\Om_{t_1t_2}}$ and $|D\psi|_{0;\Om_{t_1t_2}}$ such that
\begin{equation}\aligned\label{CondGTh}
\la_f^{-1}I_n\le (G^{kl})\le \La_F I_n,\qquad |\Th|\le c_n\la_f|\psi|_{2;\Om_{t_1t_2}}
\endaligned
\end{equation}
on $F(\Om\cap B_{r_0})\times(t_1,t_2)$. Here, $c_n$ is a positive constant depending only on $n$.

\bibliographystyle{amsplain}

\end{document}